\documentclass[10.9pt,a4paper]{article}

\usepackage[a4paper,margin=1in]{geometry}
\usepackage[T1]{fontenc}
\usepackage{microtype}

\usepackage{booktabs, multirow}
\usepackage{fancyhdr}
\usepackage{tikz}
\usetikzlibrary{arrows.meta}
\usetikzlibrary{positioning}
\usepackage{bbm}
\usepackage{natbib}
\usepackage{amstext}
\usepackage{amsthm}
\usepackage{epsfig}
\usepackage{pdfpages}
\usepackage{amsmath,amssymb,amsthm}
\usepackage{graphicx}
\usepackage{calc}
\usepackage{dsfont}
\usepackage{rotating}
\usepackage{multirow}
\usepackage{amsmath}
\usepackage{lscape}
\usepackage{tabularx}
\usepackage{bm}
\usepackage{color}
\usepackage{enumerate,array}
\usepackage{verbatim}
\usepackage{stackengine}
\usepackage{enumitem}
\usepackage{mathrsfs}
\usepackage{titling}
\usepackage{authblk}

\usepackage{hyperref}
\definecolor{cornellred}{rgb}{0.7, 0.11, 0.11}
\hypersetup{    
	colorlinks=true,
	linkcolor=blue,
	citecolor=blue,
	urlcolor=black
}

\numberwithin{equation}{section}

\allowdisplaybreaks

\newtheorem{theorem}{Theorem}[section]

\theoremstyle{definition}

\newtheorem{remark}[theorem]{Remark}

\newcommand{\R}{\mathbb{R}} 
\newcommand{\Q}{\mathbb{Q}}

\newcommand{\N}{\mathbb{N}}

\newcommand{\E}{\mathbb{E}}

\newcommand{\cB}{\mathcal{B}}
\newcommand{\cC}{\mathcal{C}}
\newcommand{\cF}{\mathcal{F}}
\renewcommand{\Pr}{\mathbb{P}}
\newcommand{\1}{\mathbbm{1}}

\renewenvironment{abstract}
  {\par\noindent\small\textbf{Abstract.}\ }
  {\par\normalsize\vspace{1em}}

\newtheorem{innercustomgeneric}{\customgenericname}
\providecommand{\customgenericname}{}
\newcommand{\newcustomtheorem}[2]{%
  \newenvironment{#1}[1]
  {%
   \renewcommand\customgenericname{#2}%
   \renewcommand\theinnercustomgeneric{##1}%
   \innercustomgeneric
  }
  {\endinnercustomgeneric}
}
\newcustomtheorem{customthm}{Assumption}
\newcustomtheorem{customcor}{Corollary}

\makeatletter
\def\singlespace{\deltaf\baselinestretch{1}\@normalsize}

\pretitle{\begin{center}\bfseries\LARGE}
\posttitle{\par\end{center}\vskip 1em}

\preauthor{\begin{center}\large}
\postauthor{\par\end{center}\vskip 0.5em}

\title{Stable convergence of partial sum processes towards discontinuous limits}

\author[1]{Johannes Brutsche}

\affil[1]{Albert-Ludwigs-Universität Freiburg}

\date{}

\begin{document}

\maketitle

\vspace{-1cm}

\begin{abstract}
We develop a stable convergence theorem for partial sum processes on sample-size dependent stochastic bases. The result allows multidimensional semimartingale limits that have conditionally independent increments and both a continuous and discontinuous martingale part. Motivated by infill asymptotics, it complements classical Gaussian stable limit theorems and supports applications to likelihood based statistical inference.
\end{abstract}

\vspace{0.5em}

\medskip 
\noindent {\bf Keywords:} 
Stable convergence on discretized filtrations \\ Multidimensional discontinuous PII \\ Partial sum process

\section{Introduction}

In this article, we derive a stable convergence result for discontinuous processes of the form $X_t^n = \sum_{k=1}^{\lfloor nt\rfloor} \chi_{nk}$, where each $X^n$ is adapted to an $n$-dependent stochastic basis $\mathcal{B}^n$. Our result is tailored to statistical applications in the context of infill asymptotics. Here, a discrete record $(Z_{k/n})_{k=0,1,\dots,n}$ of some stochastic process $Z=(Z_t)_{t\in [0,1]}$ is observed, the filtration of the basis $\mathcal{B}^n$ corresponds to the observed information and typical examples of $\chi_{nk}$ are given as $\chi_{nk}=f_k(Z_0,Z_{1/n},\dots, Z_{k/n})$ for suitable functions $f_k$. For example, suitably scaled and centered functions of the squared increments arise in many procedures for volatility inference of a semimartingale~$Z$. Another application of stable limit theorems for partial sum processes is the following: If $Z$ is a Markov process with transition density $p_t(x,y;\theta)$ from state $x$ to state $y$ within time $t$ that depends on some parameter $\theta$, we may define $\chi_{nk}(\theta)=\log(p_{1/n}(Z_{(k-1)/n},Z_{k/n};\theta))$ in which case $X^n=X^n(\theta)$ equals the sequential log-likelihood process at $\theta$. By means of Cramér--Wold, this can be extended to a finite collection $(X^n(\theta_1),\dots, X^n(\theta_M))$. Under suitable additional conditions, our stable limit result can subsequently be transferred to the process $(X^n(\theta))_\theta$, and finally to the corresponding maximum likelihood estimator (MLE), see~\cite{Brutsche/Rohde} for an application in which the limit indeed possesses jumps. 

The first result that is applicable to partial sum processes, without assuming a certain nestedness condition on the filtrations that does not hold true for infill asymptotics, is Theorem~$2.1$ in~\cite{Jacod_stableGaussian}. It establishes stable convergence towards a conditionally Gaussian process assuming a Lindeberg-type condition. Stable convergence of semimartingales towards discontinuous limits is studied in~\cite{Jacod_stablePII}, in particular in Theorem~$4.1$. However, this result in its current formulation does not treat convergence of processes that are defined on different stochastic bases $\mathcal{B}^n$. A combination of both proof techniques was used in~\cite{Brutsche/Rohde_Supp}, Proposition~H.1, to derive a result that bridges this gap. This result is formulated in one dimension on the standard Wiener space and covers limiting semimartingales without a continuous martingale part and as such was specifically tailored to the context of the main theorem on stable convergence of a certain MLE towards a Poissonian-type limit. Our result in this paper generalizes the above mentioned Proposition~H.1 to multidimensional semimartingales having limits with both continuous and discontinuous martingale parts. The assumption of the standard Wiener space is relaxed to probability spaces where the martingale representation property holds with respect to a continuous square integrable martingale (see Assumption~(A2)).

\section{Setting and assumptions}\label{Section:assumptions}

Let $\cB=(\Omega,\cF, (\cF_t)_{t\in [0,1]}, \Pr)$ be a stochastic basis that satisfies the usual conditions and $\cF=\cF_1$. We call an \textit{extension} of $\cB$ another filtered probability space $\tilde{\cB}=(\tilde{\Omega},\tilde{\cF},(\tilde{\cF}_t)_{t\in [0,1]}, \tilde{\Pr})$ that is constructed in the following way: for an auxiliary filtered space $(\hat{\Omega},\hat{\cF},(\hat{\cF}_t)_{t\in [0,1]})$, for which each $\sigma$-field $\hat{\cF}_{t-}$ is separable, and a transition probability $Q_\omega(d\hat{\omega})$ from $(\Omega,\cF)$ into $(\hat{\Omega},\hat{\cF})$, we set
\[ \tilde{\Omega} = \Omega\times\hat{\Omega}, \quad \tilde{\cF}=\cF\otimes\hat{\cF}, \quad \tilde{\cF}_t = \bigcap_{s>t} \cF_s\otimes\hat{\cF}_s, \quad \textrm{ and } \quad \tilde{\Pr}(d\omega,d\hat{\omega}) = \Pr(d\omega)Q_\omega(d\hat{\omega}).\]
Such an extension is called \textit{very good} if every $\cB$-martingale is also a $\tilde{\cB}$-martingale. We say that a process $X$ on the extension is a $\cF$-conditional process with independent increments (PII) if for $\Pr$-almost all $\omega$ the process $X(\omega,\cdot)$ is a process with independent increments on the space $\cB_\omega=(\hat{\Omega},\hat{\cF},(\hat{F}_t)_{t\in [0,1]}, Q_\omega)$. In~\cite{Jacod_stablePII}, the semimartingales that are $\cF$-conditional PIIs on a very good extension are characterized in terms of their characteristics. A sequence of random variables $(X_n)_{n\in\N}$ taking values in a metric space $E$ that is defined on $\mathcal{B}$ is said to converge $\cF$-stably to $X$ defined on the extension $\tilde{\mathcal{B}}$ if $\E[Yf(X_n)]\rightarrow \tilde{\E}[Yf(X)]$ for all bounded and continuous $f:E\rightarrow\R$ and all bounded random variables $Y$ on $(\Omega,\cF)$. This notion of convergence was introduced in~\cite{Renyi} and is slightly stronger than mere convergence in law. For an overview, the reader is referred to~\cite{Haeusler/Luschgy} and~\cite{Jacod/Shiryaev} for the particular context of stochastic processes.\\

Throughout this article, we assume the following:
\begin{itemize}
\item[(A1)] There exists a countable collection $\{V_m: m\in\N\}$ of bounded random variables which is dense in $L^2(\Omega,\cF,\Pr)$.
\item[(A2)] There exists a $p$-dimensional continuous and square integrable martingale $M$ such that the \textit{martingale representation property} holds, i.e. every local martingale $N$ on $\cB$ can be written as $N=N_0+u\bullet M$ for some predictable $\R^p$-valued process $u$.
\end{itemize}

\begin{remark}
Let $W$ be a $p$-dimensional Brownian motion and $(\cF_t)_{t\in [0,1]}$ be its completed, induced filtration. As the $\sigma$-field $\cF$ generated by $W$ up to time $T=1$ is countably generated, assumption~(A1) is satisfied by Proposition~$3.4.5$ in~\cite{Cohn}. On the other hand, $W$ is clearly square integrable and (A2) is fulfilled by Theorem~$19.11$ in~\cite{Kallenberg}. In particular, our results are applicable in most settings concerning high-frequency statistics for data that is given as a strong solution to a stochastic differential equation driven by (multidimensional) Brownian motion.
\end{remark}

We define the discretized filtration $(\cF_t^n)_{t\in [0,1]}$ via $\cF_t^n = \cF_{\lfloor nt\rfloor/n}$. Corresponding to this, we define the stochastic basis $\cB^n=(\Omega,\cF,(\cF_t^n)_{t\in [0,1]},\Pr)$ and introduce $M^n=(M_t^n)_{t\in [0,1]}$ with $M_t^n = M_{\lfloor nt\rfloor/n}$ which is a square integrable $\cB^n$-semimartingale. For each $n\in\N$, let $X^n$ be a $\cB^n$-semimartingale with
\[ X_t^n = \sum_{k=1}^{\lfloor nt\rfloor} \chi_{nk},\]
where $\chi_{nk}$ is a $\cF_{k/n}^n$-measurable and square integrable $\R^d$-valued random variable. Additionally, we assume there exists a constant $\kappa>0$ such that with $\|\cdot\|_2$ denoting the Euclidean norm
\begin{align}\label{condition:kappa}
\E\left[\sum_{k=1}^n \|\chi_{nk}\|_2^2\1_{\{\|\chi_{nk}\|_2>\kappa\}}\right] \longrightarrow\ 0\quad \textrm{ and }\quad \E\left[\sum_{k=1}^n \|M_{k/n} - M_{(k-1)/n}\|_2^2\1_{\{\|M_{k/n} - M_{(k-1)/n}\|_2>\kappa\}}\right] \longrightarrow\ 0.
\end{align}
For square integrable semimartingales $N_1,N_2$ we denote the predictable quadratic covariation of their square integrable martingale parts by $\langle N_1,N_2\rangle$ and set $\langle N_1\rangle = \langle N_1,N_1\rangle$. In case $N_k$ is $d_k$-dimensional, we denote by $N_k(i)$ the $i$-th component of $N_k$ and set $\langle N_1,N_2\rangle = (\langle N_1(i),N_2(j)\rangle)_{ij}$.
The process $(X^n,M^n)$ is a $\R^{d+p}$-dimensional semimartingale and its first characteristic $B^n$, its second modified characteristic $C^n$ and its third characteristic $\nu^n$ are given as (see Theorem~II.3.11(b) and II.3.18 in~\cite{Jacod/Shiryaev})
\[ B_t^n = \begin{pmatrix} \sum_{k=1}^{\lfloor nt\rfloor} \E[\chi_{nk}|\cF_{(k-1)/n}] \\ 0\end{pmatrix}, \quad\textrm{ and }\quad C^n = \begin{pmatrix}
\langle X^n,X^n\rangle & \langle X^n,M^n\rangle \\ \langle M^n,X^n\rangle & \langle M^n,M^n\rangle
\end{pmatrix},  \]
and for any measurable $g:\R^{d+p}\rightarrow [0,\infty)$ with $g(0)=0$,
\[ (g\star \nu^n)_t = \sum_{k=1}^{\lfloor nt\rfloor} \E\left[\left. g\left(\chi_{nk},  M_{k/n} - M_{(k-1)/n} \right)\right| \cF_{(k-1)/n}\right].\]

\section{Stable convergence result}

In this section, we provide the main theorem on the stable convergence of the process $X^n$ towards a limit that is a conditional PII with both a continuous and discontinuous component.

\begin{theorem}\label{thm:stable_conv}
Let the setup and assumptions from Section~\ref{Section:assumptions} be in place. Assume there exist the following:
\begin{itemize}
\item a continuous process $B$ with values in $\R^{d+p}$ that is of finite variation,
\item a random measure $\nu$ on $[0,1]\times\R^d\times\R^p$ that is supported on $[0,1]\times\R^d\times\{0\}$, not charging $[0,1]\times\{0\}$ and satisfying $\nu(\{t\}\times\R^d\times\R^p)=0$ identically,
\item a continuous process $C^X=(C^X(i,j))_{1\leq i,j\leq d}$ that can be decomposed as $C^X = \hat{C} + (f_{ij}\star\nu)_{1\leq i,j\leq d}$ for $f_{ij}:\R^{d+p}\rightarrow\R$ with $f_{ij}(x,y) = x_ix_j$, and $\hat{C}$ being continuous and symmetric such that $\hat{C}_t - \hat{C}_s\in \R^{d\times d}$ is positive semidefinite for $s\leq t$.
\end{itemize}
Moreover, we assume that the following convergences hold true for all $t\in [0,1]$, $1\leq i,j\leq d$ and $1\leq q\leq p$ as $n$ tends to infinity:
\begin{align}\label{condition:drift}
\sup_{s\leq t} \left\| B_s^n - B_s\right\|_2\longrightarrow_{\Pr} 0,  
\end{align}
\begin{align}\label{condition:cov_X}
\sum_{k=1}^{\lfloor nt\rfloor} \E\left[ \big(\chi_{nk}(i)-\E[\chi_{nk}(i)|\cF_{(k-1)/n}]\big)\big(\chi_{nk}(j)-\E[\chi_{nk}(j)|\cF_{(k-1)/n}]\big) | \cF_{(k-1)/n}\right]\longrightarrow_{\Pr} C_t^X(i,j),
\end{align}
\begin{align}\label{condition:cov_XM}
\sum_{k=1}^{\lfloor nt\rfloor} \E\left[ \big(\chi_{nk}(i)-\E[\chi_{nk}(i)|\cF_{(k-1)/n}]\big)\big(M_{k/n}(q)-M_{(k-1)/n}(q)\big) | \cF_{(k-1)/n}\right]\longrightarrow_{\Pr} 0,
\end{align}
and
\begin{align}\label{condition:jump_char}
\sum_{k=1}^{\lfloor nt\rfloor} \E\left[\left. g\left(\chi_{nk},  M_{k/n} - M_{(k-1)/n} \right)\right| \cF_{(k-1)/n}\right] \longrightarrow_{\Pr} (g\star\nu)_t \quad \textrm{ for all }g\in\cC,
\end{align}
where $\cC$ is a countable set of Lipschitz continuous bounded nonnegative functions on $\R^{d+p}$, vanishing in a neighborhood of $0$ and being a measure-determining class for measures not charging $0$.
Then there exist a very good extension $\tilde{\cB}$ of $\cB$ and a quasi-left continuous process $X$ on $\tilde{\mathcal{\cB}}$ which is an $\cF$-conditional PII and the pair $(X,M)$ admits the characteristics $(B,C,\nu)$, with
\[ C= \begin{pmatrix} \hat{C} & 0 \\ 0 & C^M\end{pmatrix} \]
where $C^M$ is the second characteristic of $M$, and the process $X^n$ converges stably in law to $X$ (both $X^n$ and $X$ are considered to take values in the Skorohod space $\mathcal{D}([0,1],\R^d)$).
\end{theorem}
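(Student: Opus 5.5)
The argument will follow Jacod's stable convergence scheme for semimartingales (Theorem 4.1 in \cite{Jacod_stablePII}), combined with the discretization technique of Theorem 2.1 in \cite{Jacod_stableGaussian}. The recurring theme is that $M$ is only observed through $M^n$. We carry the pair $(X^n,M^n)$ along and exploit that $M^n\to M$ locally uniformly in probability, since $M$ is continuous.

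\textbf{Step 1 (the limit).} The triple $(B,C,\nu)$ has $C$ block diagonal with $\hat C$ positive semidefinite in increments. The measure $\nu$ is supported on $\R^d\times\{0\}$, does not charge fixed times, and is compatible with $C^M$. By the characterization in \cite{Jacod_stablePII}, there is a very good extension $\tilde\cB$ carrying a quasi-left continuous $\cF$-conditional PII $X$ such that $(X,M)$ has characteristics $(B,C,\nu)$. Concretely, $X$ is built conditionally on $\cF$ from an independent Brownian motion and a Poisson random measure, time-changed by $\hat C$ and $\nu$. Because $\hat C$, $\nu$ and $B$ are $\cF$-measurable and $M$ is continuous, the extension is very good. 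The conditional law of $X$ given $\cF$ is therefore determined by these characteristics.

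\textbf{Step 2 (tightness and the joint limit).} Conditions \eqref{condition:drift}--\eqref{condition:jump_char}, together with \eqref{condition:kappa}, are the hypotheses of the functional limit theorem for semimartingales with convergent characteristics (Theorem VIII.2.4 / IX.3.x in \cite{Jacod/Shiryaev}), applied to $(X^n,M^n)$. Condition \eqref{condition:kappa} allows truncation at level $\kappa$. The $X$-block of the second modified characteristic converges to $\hat C+(x_ix_j)\star\nu=C^X$, and the cross term vanishes by \eqref{condition:cov_XM}. For the $M$-block, $\langle M^n\rangle_t=\sum_k\E[\Delta_k M\Delta_k M^\top|\cF_{(k-1)/n}]$ converges to $\langle M\rangle_t$. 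This is a standard discretization argument, using \eqref{condition:kappa} and the continuity of $\langle M\rangle$. Aldous' criterion then gives tightness of $(X^n,M^n)$ in $\mathcal D$. Moreover, any limit $(X',M')$ has $M'$ continuous, $M'=M$ in law jointly with $\cF$, and the characteristics of $(X',M')$ coincide with $(B,C,\nu)$.

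\textbf{Step 3 (identifying the limit stably).} By (A1) it suffices to show $\E[V_m f(X^n)]\to\tilde\E[V_m f(X)]$ for the countable dense family. We take a subsequence along which $(X^n,M^n,V_m)_m$ converges in law. The limit is realized on an extension by regular conditional distributions given $\cF$, with $V_m$ preserved. Write $N_t=\E[V_m|\cF_t]=N_0+u\bullet M$ by (A2). The discretized martingale $N_{\lfloor nt\rfloor/n}$ converges to $N$ jointly with $M^n\to M$. The key claim is that in the limit every bounded $\cF$-martingale $N$ is orthogonal to the limiting martingale part of $X$ and has no common jumps with it. Orthogonality follows from $\langle X^n,N^n\rangle=\langle X^n,M^n\rangle$-type expressions: we approximate $u$ by simple predictable integrands, so the cross term is controlled by \eqref{condition:cov_XM}. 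The absence of common jumps follows because $\nu$ lives on $\R^d\times\{0\}$ and $N$ is continuous. Jacod's criterion (Theorem 2.1 in \cite{Jacod_stablePII}) then says that a semimartingale on a very good extension, with $\cF$-measurable characteristics, orthogonal to all $\cB$-martingales, is an $\cF$-conditional PII with those characteristics. This identifies the conditional law of the limit as that of $X$ from Step 1. Uniqueness gives convergence along the full sequence.

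\textbf{Main obstacle.} The delicate part is Step 3: passing the orthogonality $\langle X^n,N^n\rangle\to 0$ to the limit with $N$ an arbitrary $\cF$-martingale rather than $M$ itself. The filtrations $\cF^n$ are not nested in the sense of \cite{Jacod_stablePII}, so we must use the representation $N=N_0+u\bullet M$ and approximate $u$ in $L^2(d\langle M\rangle)$ by elementary integrands with grid-aligned jump times. The approximation error is then bounded uniformly in $n$ via Cauchy--Schwarz, using the $L^2$-bounds implied by \eqref{condition:kappa} and \eqref{condition:cov_X}. Once this holds, the martingale problem for the limit closes, and the rest is bookkeeping with characteristics.
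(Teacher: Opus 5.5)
Your architecture matches the paper's: existence via Jacod's characterization, tightness, a subsequential limit realized on $\Omega\times\hat\Omega$, identification through uniqueness of the $\cF$-conditional law, and stable convergence from the dense family in (A1). The gap is that the step carrying the proof is asserted rather than proved. To invoke Jacod's criterion you need two things. First, the subsequential limit must be a semimartingale with characteristics $(B,C,\nu)$ \emph{relative to the extended filtration} $\tilde\cF_t=\bigcap_{s>t}\cF_s\otimes\hat\cF_s$. Second, that particular extension must be very good. Your Step~2 claim that ``the characteristics of $(X',M')$ coincide with $(B,C,\nu)$'' has no meaning until the filtration is specified, and the cited results do not deliver it. Chapter~VIII of \cite{Jacod/Shiryaev} (e.g.\ Theorem~VIII.2.4) treats PII limits with deterministic characteristics. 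The Section~IX.3 theorems identify limits through well-posed martingale problems whose characteristics are continuous functionals of the limit path. Here $(B,C,\nu)$ depend on $\omega$ through $\cF$-information that $(X',M')$ does not generate. In Step~3 you simply assume the very good property of the extension you construct.

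The missing device is the one the paper uses for the non-nested bases $\cB^n$. Carry the discretized martingales $N^m(n)_t:=N^m_{\lfloor nt\rfloor/n}$, with $N^m_t=\E[V_m|\cF_t]$, through the joint convergence, together with the $\kappa$-truncated pair (so jumps are bounded) and its characteristics. Since the $N^m$ generate $(\cF_t)$ up to null sets, the filtration generated by the limit $(\mathcal N,X)$, $\mathcal N=(N^m)_m$, is $\tilde\cF_t$. Proposition~IX.1.17 of \cite{Jacod/Shiryaev} then transfers the local martingale property to $\tilde\cF_t$ for:
\begin{itemize}
\item the $N^m(n)$;
\item $(X^n,M^n)-B^n$;
\item $g\star\mu^n-g\star\nu^n$;
\item $L^n(L^n)^T$ minus the predictable covariation matrix, after a localization that needs uniform-in-time convergence of the covariations (obtained from pointwise convergence by monotonicity).
\end{itemize}
This gives the characteristics $(B,C,\nu)$ on $\tilde\cB$ and the very good property at once: $M$ remains a martingale, hence by (A2) so does every bounded $\cB$-martingale.

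Conversely, what you call the main obstacle is not one. Suppose $(X,M)$ has block-diagonal $C$ on a very good extension. Then any bounded $\cB$-martingale $N=N_0+u\bullet M$ is the same stochastic integral on $\tilde\cB$, so $\langle X^c,N\rangle=u\bullet\langle X^c,M\rangle=0$. Since $N$ is continuous, there are no common jumps either. The paper does not even need this, because Theorem~3.2 of \cite{Jacod_stablePII} determines the $\cF$-conditional law of $X$ directly from $M$ and the characteristics of the pair $(X,M)$. Your prelimit argument that $\langle X^n,N^n\rangle_t\to_{\Pr}0$ via simple approximations of $u$ is correct but superfluous. Passing it to the limit would in any case require exactly the martingale-transfer argument above.

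A minor point: existence in Step~1 requires $B$, $\hat C$ and $\nu$ to be $(\cF_t)$-predictable, not merely $\cF$-measurable. This holds because they are continuous limits in probability of $\cF_t$-measurable quantities.
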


\begin{remark}
From a probabilistic perspective, it is not necessary to assume that the limiting covariation in~\eqref{condition:cov_XM} is zero. However, in typical statistical applications, $M$ is not observable. In such a case, the limiting process $X$ cannot be constructed from the observed data in case~\eqref{condition:cov_XM} has a limit different from zero.
\end{remark}

\begin{proof}[Proof of Theorem~\ref{thm:stable_conv}] Recall the constant $\kappa$ from assumption~\eqref{condition:kappa}. We define processes $\tilde{M}^n=(\tilde{M}_t^n)_{t\in [0,1]}$ and $\tilde{X}^n = (\tilde{X}_t^n)_{t\in [0,1]}$ via
\[ \tilde{M}_t^n = \sum_{k=1}^{\lfloor nt\rfloor} \left( M_{k/n}-M_{(k-1)/n}\right)\1_{\{\|M_{k/n}-M_{(k-1)/n}\|_2\leq \kappa\}} \quad\textrm{ and }\quad \tilde{X}_t^n = \sum_{k=1}^{\lfloor nt\rfloor} \chi_{nk}\1_{\{\|\chi_{nk}\|_2\leq\kappa\}}.\]
Using~\eqref{condition:kappa}, we obtain
\[ \sup_{t\leq 1} \|X_t^n - \tilde{X}_t^n\|_2 \leq \sum_{k=1}^n \|\chi_{nk}\|_2\1_{\{\|\chi_{nk}\|_2>\kappa\}} \leq \frac{1}{\kappa}\sum_{k=1}^n \|\chi_{nk}\|_2^2 \1_{\{\|\chi_{nk}\|_2>\kappa\}} \longrightarrow_{\Pr} 0. \]
Hence, using Theorem~$3.18$(a) in~\cite{Haeusler/Luschgy}, $\cF$-stable convergence $X^n\rightarrow X$ follows once we prove $\cF$-stable convergence $\tilde{X}^n\rightarrow X$. This proof is conducted in seven steps. The general idea follows the proof of Proposition~H.1 in~\cite{Brutsche/Rohde_Supp}, the main new parts are found in (i), (ii) and (vi) which are presented in greater detail. However, all other steps are given adjusted to our setting to provide a self-contained exposition.

\begin{itemize}
\item[(i)] We first show that the convergences in~\eqref{condition:drift}-\eqref{condition:jump_char} remain valid if we replace $B^n$ by the first characteristic $\tilde{B}^n$ of $(\tilde{X}^n,\tilde{M}^n)$, $\chi_{nk}$ with $\chi_{nk}\1_{\{\|\chi_{nk}\|_2\leq\kappa\}}$ and $M$ with $\tilde{M}^n$. We obtain~\eqref{condition:drift} for $\tilde{B}^n$ directly from~\eqref{condition:kappa} and the estimate
\begin{align*}
\sup_{s\leq t} \|\tilde{B}_s^n - B^n_s\|_2 \leq \sum_{k=1}^n \E\left[\|\chi_{nk}\|_2\1_{\{\|\chi_{nk}\|_2>\kappa\}}|\cF_{(k-1)/n}\right] \leq \frac{1}{\kappa} \sum_{k=1}^n \E\left[\|\chi_{nk}\|_2^2\1_{\{\|\chi_{nk}\|_2>\kappa\}}|\cF_{(k-1)/n}\right].
\end{align*}
Denote by $\tilde{\nu}^n$ the third characteristic of $(\tilde{X}^n,\tilde{M}^n)$. Then, for any bounded $g:\R^{d+p}\rightarrow\R$,
\begin{align*}
&\left| (g\star \tilde{\nu}^n)_t-(g\star \nu^n)_t\right|\\
&\hspace{0.2cm}\leq 2\|g\|_{\sup} \sum_{k=1}^{\lfloor nt\rfloor} \E\left[\left. \1_{\{\|\chi_{nk}\|_2> \kappa\}}+\1_{\{\|M_{k/n}-M_{(k-1)/n}\|_2> \kappa\}}\right|\cF_{(k-1)/n}\right] \\
&\hspace{0.2cm}\leq \frac{2\|g\|_{\sup}}{\kappa^2} \sum_{k=1}^{\lfloor nt\rfloor} \E\left[ \left. \|\chi_{nk}\|_2^2\1_{\{\|\chi_{nk}\|_2> \kappa\}} + \|M_{k/n}-M_{(k-1)/n}\|_2^2\1_{\{\|M_{k/n}-M_{(k-1)/n}\|_2>\kappa\}}\right|\cF_{(k-1)/n}\right] ,
\end{align*}
and this upper bound converges to zero in probability by assumption~\eqref{condition:kappa}. For one-dimensional semimartingales $Y_1,\tilde{Y}_1,Y_2,\tilde{Y}_2$, we find by the Cauchy-Schwarz inequality for symmetric and positive semidefinite bilinear forms,
\begin{align*}
\left| \langle Y_1,Y_2\rangle_t - \langle \tilde{Y}_1,\tilde{Y}_2\rangle_t\right| &= \left| \langle Y_1-\tilde{Y}_1,Y_2\rangle_t + \langle \tilde{Y}_1,Y_2-\tilde{Y}_2\rangle_t\right| \leq \sqrt{\langle Y_1-\tilde{Y}_1\rangle_t \langle Y_2\rangle_t} + \sqrt{\langle Y_2-\tilde{Y}_2\rangle_t \langle \tilde{Y}_1\rangle_t}.
\end{align*}
Moreover, note that the left side in~\eqref{condition:cov_X} equals $\langle X^n(i),X^n(j)\rangle_t$ and that of~\eqref{condition:cov_XM} equals the predictable covariation $\langle X^n(i),M^n(q)\rangle_t$. Hence, the convergences~\eqref{condition:cov_X}-\eqref{condition:cov_XM} follow for $\tilde{X}^n$ and $\tilde{M}^n$ once we show for each component $1\leq i\leq d$ and $1\leq q\leq p$ that
\[ \langle X^n(i)-\tilde{X}^n(i)\rangle_t \rightarrow_\Pr 0\quad\textrm{ and }\quad \langle M^n(q)-\tilde{M}^n(q)\rangle_t \longrightarrow_\Pr 0.\]
However, the first convergence is a consequence of~\eqref{condition:kappa} and noting that
\[ \langle X^n(i)-\tilde{X}^n(i)\rangle_t \leq \sum_{k=1}^{\lfloor nt\rfloor} \E\left[\left.\chi_{nk}(i)^2\1_{\{\|\chi_{nk}\|_2>\kappa\}} \right|\cF_{(k-1)/n}\right].\]
The second one follows analogously and the assumptions~\eqref{condition:drift}-\eqref{condition:jump_char} remain valid when $B^n$ is replaced by the first characteristic of $(\tilde{X}^n,\tilde{M}^n)$, $\chi_{nk}$ with $\chi_{nk}\1_{\{\|\chi_{nk}\|_2\leq\kappa\}}$ and $M$ with $\tilde{M}^n$. In the sequel, we apply these assumptions then also to $(\tilde{X}^n,\tilde{M}^n)$ without further notice. \\
Finally, note the following: By Theorem~3.3.1(B) in~\cite{Jacod/Protter}, we obtain the convergence $\langle M^n(i),M^n(j)\rangle_t\rightarrow_{\Pr} C_t^M(i,j)$ and by the same reasoning as above, we also see $\langle \tilde{M}^n(i),\tilde{M}^n(j)\rangle_t\rightarrow_{\Pr} C_t^M(i,j)$.

\item[(ii)] We introduce $\tilde{G}^n$ and $G$ via
\[ \tilde{G}^n := \begin{pmatrix}
\langle \tilde{X}^n,\tilde{X}^n\rangle & \langle \tilde{X}^n,\tilde{M}^n\rangle \\
\langle \tilde{M}^n,\tilde{X}^n\rangle & \langle \tilde{M}^n,\tilde{M}^n\rangle
\end{pmatrix}\qquad \textrm{ and }\qquad G := \begin{pmatrix} C^X & 0 \\ 0 & C^M\end{pmatrix}.\]
By step (i), $\tilde{G}_t^n\rightarrow_{\Pr} G_t$ for all $t\in [0,1]$. Note that $\tilde{G}^n$ equals the second modified characteristic of $(\tilde{X}^n,\tilde{M}^n)$ and hence for any vector $u\in\R^{d+p}$, Proposition~II.2.17(b) in~\cite{Jacod/Shiryaev} reveals that $u^T\tilde{G}_t^n u$ is non-decreasing in $t$.\\ \pagebreak

For $v\in\R^p$, $v^TC_t^M v$ is increasing by Proposition~II.2.9 in~\cite{Jacod/Shiryaev}. For $w\in\R^d$, 
\begin{align*}
w^T(C_t^X-C_s^X) w &= w^T(\hat{C}_t - \hat{C}_s)w + w^T\left( \int_{(s,t]}\int_{\R^d}\int_{\R^p} f_{ij}(x,y) d\nu(du,dx,dy\right)_{ij} w \\
 &= w^T(\hat{C}_t - \hat{C}_s)w + \int_{(s,t]}\int_{\R^d}\int_{\R^p} (w^T x)^2 d\nu(du,dx,dy) \geq 0,
\end{align*} 
where the last step uses the assumption on $\hat{C}$. Hence, $u^T G_t u$ is also increasing for $u\in\R^{d+p}$. By a standard argument (that allows to deduce uniform convergence from pointwise convergence of non-decreasing functions to a non-decreasing limiting function), we then obtain for any $u\in\R^{d+p}$
\[ \sup_{t\leq 1} \left|u^T\tilde{G}_t^n u - u^TG_t u\right| = \sup_{t\leq 1} \left| u^T (\tilde{G}_t^n - G_t)u\right| \longrightarrow_{\Pr} 0. \]
By setting $u$ to the standard basis vectors, we conclude uniform stochastic convergence of the diagonal terms. Subsequently choosing $u=(1,1,0,\dots, 0)$ gives uniform stochastic convergence of the first off-diagonal entries. Repeating this finally allows to conclude $\sup_{t\leq 1} \| \tilde{G}_t^n - G_t\| \longrightarrow_{\Pr} 0$ for any matrix norm $\|\cdot\|$.

\item[(iii)] By the convergences~\eqref{condition:drift}-\eqref{condition:jump_char} and that of $\langle \tilde{M}^n\rangle$, the processes $B$, $G$ and $g\star\nu$ for $g\in\mathcal{C}$ are $(\cF_t)_{t\in [0,1]}$-adapted. By assumption, $B$ and $\hat{C}$ are continuous in $t$, and for $C^X$ (and hence $G$) and $g\star\nu$, continuity in $t$ follows from the assumption $\nu(\{t\}\times \R^{d+p})=0$. Therefore, $B,C$ and $g\star\nu$ are adapted and continuous processes, hence $(\cF_t)_{t\in [0,1]}$-predictable. Theorem~$3.2$ in~\cite{Jacod_stablePII} then provides existence of a very good extension $\mathcal{B}'$ of $\mathcal{B}$ and a quasi-left continuous process $X$ on $\mathcal{B}'$ which is an $\cF$-conditional PII such that the pair $(X,M)$ admits the characteristics $(B, C, \nu)$. By the same result, $X$ can be realized as follows: Let $(\hat{\Omega}, \hat{\cF}, (\hat{\cF}_t)_{t\in [0,1]})$ be the canonical space of all càdlàg functions on $[0,1]$ with values in $\R^d$ and $X$ the canonical process. Then $\Pr'(d\omega,d\hat{\omega}) = \Pr(d\omega)Q_\omega(d\hat{\omega})$ and $Q_\omega$ is entirely determined by the first $d$ coordinates of $B$, $C^X$ and $\nu(dt, dx,\{0\})$.

\item[(iv)] Recall the countable collection $\{V_m: m\in\N\}$ from Assumption~(A1) and define $N^m := (N_t^m)_{t\in [0,1]}$ via $N_t^m = \E[V_m|\cF_t]$. According to IX.7.9 and IX.7.10 in~\cite{Jacod/Shiryaev}, we have\smallskip
\begin{itemize}
\item[(A)] Every bounded martingale on $(\Omega,\cF, (\cF_t)_{t\in [0,1]},\Pr)$ is the limit in $L^2$, locally uniformly in time, of a sequence of sums of stochastic integrals w.r.t. a finite number of different $N^m$.
\item[(B)] $(\cF_t)_{t\in [0,1]}$ is the smallest filtration, up to $\Pr$-null sets, w.r.t. which all $N^m$, $m\in\N$, are adapted.
\end{itemize}
Let $m\in\N$ and introduce the processes $N(n)^m=(N(n)_t^m)_{t\in [0,1]}$ via $N(n)_t^m = N_{\lfloor nt\rfloor/n}^m$. Because $N^m$ is bounded, $\sup_{\omega,t,n}|N(n)_t^m(\omega)| <\infty$ and as $N^m$ is continuous by the martingale representation property in Assumption~(A2), we obtain convergence $(\tilde{M}^n, N(n)^1, \dots, N(n)^m) \rightarrow_\Pr( M, N^1, \dots, N^m)$ in the Skorohod space $\mathcal{D}([0,1],\R^{p+m})$. Moreover, we may consider $\mathcal{N}=(N^m)_{m\in\N}$ and $\mathcal{N}(n) = (N^m(n))_{m\in\N}$ as a process with paths in the Skorohod space $\mathcal{D}([0,1],\R^\N)$ and $\tilde{\mathcal{H}}^n =(g\star \tilde{\nu}^n)_{g\in\mathcal{C}}$, $\mathcal{H}=(g\star\nu)_{g\in\mathcal{C}}$ as processes in $\mathcal{D}([0,1],\R^\mathcal{C})$. Then, our convergence assumption and (ii) reveal convergence in probability
\begin{align}\label{eq_proof:conv_1}
\left( \tilde{B}^n, \tilde{G}^n, \tilde{\mathcal{H}}^n, \tilde{M}^n, \mathcal{N}(n)\right) \longrightarrow_{\Pr}\ \left( B,G,\mathcal{H},M, \mathcal{N}\right)
\end{align} 
in the Skorohod sense. By construction of $\tilde{X}^n$ and $\tilde{M}^n$, the jumps of $(\tilde{X}^n,\tilde{M}^n)$ are uniformly bounded, such that Theorem~VI.4.18 and Lemma~VI.4.22 in~\cite{Jacod/Shiryaev} together with (ii) reveal that $(\tilde{X}^n,\tilde{M}^n)$ is tight in $\mathcal{D}([0,1],\R^{d+p})$. Moreover, the process on the right-hand side of~\eqref{eq_proof:conv_1} is continuous and we conclude with Corollary~VI.3.33 in~\cite{Jacod/Shiryaev} that $(\tilde{X}^n, \tilde{M}^n, \tilde{B}^n, \tilde{G}^n, \tilde{\mathcal{H}}_n, \mathcal{N}(n))$ is tight in the respective Skorohod space. Moreover, for any limiting process $(\hat{X},\hat{M}, \hat{B}, \hat{G},\hat{\mathcal{H}}, \hat{\mathcal{N}})$, we find $\mathcal{L}(\hat{M}, \hat{B}, \hat{G},\hat{\mathcal{H}}, \hat{\mathcal{N}}) = \mathcal{L}(M,B,G,\mathcal{H},\mathcal{N})$.

\item[(v)] We choose a subsequence, indexed in $n_k$, such that the sequence $\mathcal{L}(\tilde{X}^{n_k}, \tilde{M}^{n_k}, \tilde{B}^{n_k}, \tilde{G}^{n_k},\tilde{\mathcal{H}}^{n_k}, \mathcal{N}(n_k))$ of distributions converges weakly to some measure $\overline{\Q}$ on the corresponding image. From what precedes, one can realize the limit as follows: Consider again the canonical space $(\hat{\Omega},\hat{\cF}, (\hat{\cF}_t)_{t\in [0,1]})$ of càdlàg functions on $[0,1]$ with values in $\R^d$ and the canonical process $X$. Then set $\tilde{\Omega} = \Omega\times \hat{\Omega}$, $\tilde{\cF}=\cF\otimes \hat{\cF}$ and $\tilde{\cF}_t = \bigcap_{s> t} \cF_s\otimes\hat{\cF}_s$. Since $\cF=\sigma(V_m: m\in\N)$ up to $\Pr$-null sets, the pullback measure of $\overline{\Q}$ is a measure on $(\tilde{\Omega},\tilde{\cF})$, in particular there exists a probability measure $\tilde{\Pr}$ on $(\tilde{\Omega},\tilde{\cF})$ whose $\Omega$-marginal is $\Pr$, and such that $\mathcal{L}(\tilde{X}^{n_k}, \tilde{M}^{n_k}, \tilde{B}^{n_k}, \tilde{G}^{n_k},\tilde{\mathcal{H}}^{n_k}, \mathcal{N}(n_k))$ converges weakly to the law of $(X,M,B,G,\mathcal{H},\mathcal{N})$ under $\tilde{\Pr}$.\\
Hence, we have an extension $\tilde{\mathcal{B}}=(\tilde{\Omega},\tilde{\cF},(\tilde{\cF}_t)_{t\in [0,1]},\tilde{\Pr})$ of $\mathcal{B}=(\Omega,\cF,(\cF_t)_{t\in [0,1]},\Pr)$ with a disintegration $\tilde{\Pr}(d\omega,d\hat{\omega})=\Pr(d\omega)\tilde{Q}_\omega(d\hat{\omega})$ (since $(\hat{\Omega},\hat{\cF})$ is Polish, see II.1.2 in~\cite{Jacod/Shiryaev}). Up to $\tilde{\Pr}$-null sets, the filtrations $(\cF)_{t\in [0,1]}$ and $(\tilde{\cF}_t)_{t\in [0,1]}$ are generated by $\mathcal{N}$ and $(\mathcal{N},X)$, respectively (this follows from property (B) in step (iv)).

\item[(vi)] In this step, we prove that $(X,M)$ is a semimartingale on the stochastic basis $\tilde{\mathcal{B}}$ with respective characteristics $(B,C,\nu)$. Let $\mu^n$ be the jump measure of $(\tilde{X}^n,\tilde{M}^n)$. Then all components of the processes $\mathcal{N}(n)$, $(\tilde{X}^n,\tilde{M}^n) - \tilde{B}^n$ and $(g\star \mu^n - g\star\tilde{\nu}^n)_{g\in\mathcal{C}}$ are $\mathbb{F}^n$-local martingales with uniformly bounded jumps. Additionally, by weak convergence of $(\tilde{X}^n,\tilde{M}^n)$ to $(X,M)$, Corollary~VI.2.8 in~\cite{Jacod/Shiryaev} provides weak convergence of $g\star\mu^n$ to $g\star \mu$ for all $g\in\mathcal{C}$. Hence, by Proposition~IX.1.17 in~\cite{Jacod/Shiryaev}, all components of 
\begin{align}\label{eq:processes_are_F-martingales}
\mathcal{N},\quad (X,M)-B \quad\textrm{ and }\quad (g\star \mu - g\star\nu)_{g\in\mathcal{C}}
\end{align}  
($\mu$ denoting the jump measure associated to $(X,M)$) are $(\tilde{\cF}_t)_{t\in [0,1]}$-local martingales. Let $\tilde{L}^n = (\tilde{X}^n,\tilde{M}^n)-\tilde{B}^n$ and $L=(X,M)-B$, which should both be read as a column vector. Consider $(\tilde{L}^n,\tilde{G}^n), (L,G)$ as elements in $\R^q$ with $q=(d+p)(d+p+1)$ and define stopping times
\begin{align*}
\tau_R^n &:= \inf\{t\in [0,1]: \|(\tilde{L}_t^n,\tilde{G}_t^n)\|_2\geq R \textrm{ or } \|(\tilde{L}_{t-}^n,\tilde{G}_{t-}^n)\|_2\geq R\}\wedge 1, \\ 
\tau_R &:= \inf\{t\in [0,1]: \|(L_t,G_t)\|_2\geq R \textrm{ or } \|(L_{t-},G_{t-})\|_2\geq R\}\wedge 1.
\end{align*} 
Following the construction in the proof of Proposition~IX.1.17 in~\cite{Jacod/Shiryaev}, it is possible to find a sequence $(R_m)_{m\in\N}$ with $R_m\nearrow\infty$ such that the weak convergence in (v) can be extended to show $\mathcal{L}((\tilde{L}^n)^{\tau_{R_m}^n},(\tilde{G}^n)^{\tau_{R_m}^n}, \tilde{X}^n, \mathcal{N}(n))$ converges weakly to $\mathcal{L}(L^{\tau_{R_m}},G^{\tau_{R_m}},X,\mathcal{N})$. Then, the $\mathbb{F}^n$-local martingale 
\[ (\tilde{L}^n)^{\tau_{R_m}^n}((\tilde{L}^n)^{\tau_{R_m}^n})^T - (\tilde{G}^n)^{\tau_{R_m}} \]
has bounded jumps and Proposition~IX.1.17 in~\cite{Jacod/Shiryaev} shows that $L^{\tau_{R_m}}(L^{\tau_{R_m}})^T-G^{\tau_{R_m}}$ is a $(\tilde{\cF}_t)_{t\in [0,1]}$-martingale as it is bounded. As $L$ and $G$ are càdlàg, $\tau_{R_m}\nearrow 1$ a.s. for $m\to\infty$, implying that $LL^T-G$ is a $(\tilde{\cF}_t)_{t\in [0,1]}$-local martingale. This was also shown to be true for the processes in~\eqref{eq:processes_are_F-martingales}, and we conclude that $(X,M)$ is a semimartingale on $\tilde{\mathcal{B}}$ with characteristics $(B,C,\nu)$. Here, we used the decomposition of $C^X$ and the fact that $\nu(\{t\}\times\R^{d+p})=0$ by assumption which allows to extract $C$ as the second characteristic from the second modified characteristic $G$, compare Proposition~II.2.17 in~\cite{Jacod/Shiryaev}. Because all elements of $\mathcal{N}$ are $\tilde{\mathcal{B}}$-martingales, property (A) of step (v) gives that every martingale on $\mathcal{B}$ is also a martingale on $\tilde{\mathcal{B}}$ and our extension is very good. Then, Theorem~$3.2$ of~\cite{Jacod_stablePII} states that the conditional $\tilde{\Pr}$-law of $X$ knowing $\mathcal{F}$ is entirely determined by $M$ and the characteristics of the pair $(X,M)$. In particular, compare (iv), we have $\tilde{Q}_\omega= Q_\omega$ for $\Pr$-almost every $\omega\in\Omega$ and the original sequence $(\tilde{X}^n, \tilde{M}^n, \tilde{B}^n, \tilde{G}^n, \tilde{\mathcal{H}}_n, \mathcal{N}(n))$ converges in distribution to $(X,M,B,G,\mathcal{H},\mathcal{N})$ defined on the basis $\tilde{\mathcal{B}}$.

\item[(vii)] It remains to prove that the convergence is indeed $\cF$-stable. This follows as in the fourth step of the proof of Theorem~$2.1$ in~\cite{Jacod_stableGaussian}, but is presented to keep the proof self-contained. From (vi) we know that the sequence $(\tilde{X}^n,\mathcal{N}(n))$ converges in law to $(X,\mathcal{N})$. In particular, if $f:\mathcal{D}([0,1],\R^d)\rightarrow\R$ is a bounded continuous function we obtain $\E[ f(\tilde{X}^n) N(n)_1^m] \rightarrow {\E}[ f(X) N_1^m]$, where $\tilde{\E}$ denotes expectation w.r.t. $\tilde{\Pr}$. Here we used that $N(n)^m$ is a component of $\mathcal{N}(n)$ that is uniformly bounded in $n$. By boundedness of $N(n)^m$ and convergence of $N(n)^m$ to $N^m$ in the Skorohod space in probability, we then deduce $N(n)_1^m \rightarrow N_1^m$ in $L^1$ such that $\E[ f(\tilde{X}^n) N_1^m] \rightarrow \tilde{\E}[ f(X)N_1^m]$. Because $\tilde{\E}[UN_1^m] = \tilde{\E}[UV_m]$ for any bounded $\tilde{\cF}$-measurable variable $U$, we deduce $\E[ f(\tilde{X}^n) V_m] \rightarrow \tilde{\E}[ f(X)V_m]$. By (A1), any bounded $\cF$-measurable random variable $V$ is the $L^2$-limit of a sequence in $\{V_m: m\in\N\}$, hence 
\[ \E\left[ f(\tilde{X}^n) V\right] \rightarrow \tilde{\E}\left[ f(X)V\right],\]
and stable convergence is proven.
\end{itemize}
\end{proof}

\bibliographystyle{plainnat}
\bibliography{bibliography}

\end{document}